\title{
On the representation of $N$ by the powers of the golden mean and Zeckendorf's theorem}
\author{Edward Zhu}
\newtheorem{theorem}{Theorem}
\newtheorem{lemma}{Lemma}
\newtheorem{proposition}{Proposition}
\newtheorem{corollary}{Corollary}
\begin{document}

\maketitle

\section{Introduction}

For $\varphi$ the golden ratio,
\[\varphi = \frac{1+\sqrt{5}}{2}, \]
the base-$\varphi$ representation of a positive integer $N$ is an expression of $N$ as a sum of non-consecutive powers of $\varphi$. It is well known that every number has a unique base-$\varphi$ representation \cite{2}. For example,
\[
6 = \varphi^3 + \varphi^1 + \varphi^{-4}.
\]
It's not surprising that we can ``guess" the value of the expansion from just the {\em positive} powers of $\varphi$, because $\varphi^3 + \varphi^1 = 5.854\dots$, which rounds up to $6$. In this paper, we will show that we can also determine the value of the expansion from just the {\em negative} powers of $\varphi$, with the extra information about first non-vanishing positive positive power
  of $\varphi $ in the decomposition. This is surprising, because the negative power is $\varphi^{-4}$, which is $0.145\dots$, which is (we must point out) nowhere near 6.

The base-$\varphi$ representation is related to the Fibonacci sequence and Zeckendorf's theorem.

Let $F_1, F_2, \dots ,$ be the Fibonacci sequence, so $F_0 =0, F_1 =1 , F_{ n +1 } = F_{ n} + F_{n-1}$. The Zeckendorf's theorem states that every positive integer can be written as the unique sum of non-consecutive Fibonacci numbers $F_n, n \geq 2$. For example,
\[6 = F_2 + F_5, \; 7 = F_3 + F_6, \; 8 = F_6, \; 9 = F_2 + F_6, \; 10 = F_3 + F_6\]

The following formulas can be found on page 15 of \cite{1}
\begin{align} \label{Nfn}
 2F_n &= F_{n+1} + F_{n-2}  \\
 3F_n &= F_{n+2} + F_{n-2} \nonumber \\
 4F_n &= F_{n+2} + F_n + F_{n-2} \nonumber \\
 5F_n &= F_{n+3} + F_{n-1} + F_{n-4} \nonumber \\
 6F_n &= F_{n+3} + F_{n+1} + F_{n-4} \nonumber \\
 7F_n &= F_{n+4} + F_{n-4} \nonumber \\
 8F_n &= F_{n+4} + F_{n} + F_{n-4} \nonumber \\
 9F_n &= F_{n+4} + F_{n+1} + F_{n-2} + F_{n-4} \nonumber \\
 10F_n &= F_{n+4} + F_{n+2} + F_{n-2} + F_{n-4} \nonumber \\
 11F_n &= F_{n+4} + F_{n+2} + F_{n} + F_{n-2} + F_{n-4} \nonumber \\
 12F_n &= F_{n+5} + F_{n+1} + F_{n} + F_{n-3} + F_{n-6}\nonumber
\end{align}
These formulas give the Zeckendorf decompositions of the  multiples of Fibonacci numbers, $NF_n$, for $2\leq N \leq 12$, and $n$ is large enough such that all indices on the right hand side are $\geq 2$.

A natural question is for which positive integers $N$, $NF_n$ can be written as
\begin{equation} \label{1.1}
    NF_n = F_{n+i_1} + F_{n+i_2} + F_{n+i_3} + \dots + F_{n+i_m}
\end{equation}
for all $n$, where $i_1 < i_2 < \dots < i_m$ are integers and $i_{k+1} - i_{k} \geq 2$. This problem was answered in \cite{5}, and the answer is related to the base-$\varphi$ representation. Their results state that a base $\varphi$ representation of $N$ leads to an expansion (\ref{1.1}) of $NF_n$, where $\varphi$ is the golden mean.
A base $\varphi$ representation of $N$ can be also expressed as
\begin{equation} \label{1.2}
    N = \sum_{i=R}^{L}d_i \varphi^i
\end{equation}
where $d_i=0$ or $1$ and $R \leq 0 \leq L$ and there are no consecutive powers of $\varphi$, that is, $d_i d_{i+1}=0$. It was shown that if (\ref{1.2}) holds,
then
\begin{equation} \label{1.3}
NF_n = \sum_{i=R}^{L}d_i F_{n+i}
\end{equation}
For all positive integers $N$, we have representation (\ref{1.1}) for $NF_n$.

We adopt the notation used in \cite{3}, for $N$ having the expansion (\ref{1.2}), we write
\begin{equation} \label{bn}
    \beta(N) = d_L d_{L-1} \dots d_1 d_0 . d_{-1} d_{-2} \dots d_{R+1} d_R
\end{equation}
and
\begin{equation} \label{b2}
\beta^{+}(N)= d_L d_{L-1} \dots d_1 d_0 \; \; \;  {\rm and} \; \; \;  \beta^{-}(N)= d_{-1} d_{-2} \dots d_{R+1} d_R
\end{equation}
The properties of base $\varphi$ representations have been studied in \cite{3} and \cite{4}. In particular, the integers with given $d_1 d_0 d_{-1}$ are characterised.

In this work, we study how the positive part $\beta^+(N)$ or negative part
$\beta^-(N)$ in the base $\varphi$ representations of positive integers $N$ determines $N$. Our main result, stated in the beginning, is that $\beta^{-}(N)$, $d_0$ and the parity of the first positive index $i$ with $d_i=1$  determines $N$. To state our result, recall the Lucas numbers $L_n$, defined by $L_0 = 2$, $L_1 = 1$ and the recursive relation $L_n = L_{n-1} + L_{n-2}$.
The Lucas number $L_{-n}$ with negative index $-n$ is defined to be $ L_{-n} = (-1)^n L_n $, so the relation $L_n = L_{n-1} + L_{n-2}$ holds
 for every integer $n$ \cite{7}.
More precisely, our main results are

\begin{theorem} \label{t1}
For a positive integer with representation $\beta(N)$ as in {\rm (\ref{bn})},
let $i$ be the smallest positive index such that $d_i=1$, that is, $d_j =0$ for $0< j<i$, then
\[N = d_L L_{L} + d_{L-1}L_{L-1} + \dots + d_1 L_{1} + d_0 + 1 \; \; \; {\rm for} \; \; i \; {\rm odd} .\]
\[N = d_L L_{L} + d_{L-1} L_{L-1} + \dots + d_1 L_{1} + d_0 \; \; \; {\rm for} \; \; i \; {\rm even}\]
\end{theorem}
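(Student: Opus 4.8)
The plan is to collapse the two cases into a single statement about one auxiliary integer and then pin that integer down with a size estimate. Write $\psi=\tfrac{1-\sqrt5}{2}=-1/\varphi$ for the conjugate of $\varphi$, and recall the identity $L_k=\varphi^k+\psi^k$, valid for every integer $k$ (with the convention $L_{-n}=(-1)^nL_n$). Set
\[
\varepsilon \;=\; N - \Big(\sum_{k=1}^{L} d_k L_k + d_0\Big),
\]
so that the theorem asserts exactly $\varepsilon=1$ when $i$ is odd and $\varepsilon=0$ when $i$ is even. Since $N$, the $L_k$, and $d_0$ are integers, $\varepsilon\in\mathbb{Z}$ automatically; the entire problem is therefore to show $\varepsilon\in\{0,1\}$ with the value governed by the parity of $i$.

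First I would substitute the representation $N=\sum_{k=R}^{L} d_k\varphi^k$ together with $L_k=\varphi^k+\psi^k$ into the definition of $\varepsilon$. The positive-index $\varphi$-terms cancel, the $d_0$ cancels, and what survives is
\[
\varepsilon \;=\; \underbrace{\sum_{k=R}^{-1} d_k\varphi^k}_{A} \;-\; \underbrace{\sum_{k=1}^{L} d_k\psi^k}_{B}.
\]
Thus $A-B$ is an integer, and everything reduces to locating it. (The vanishing of the $\sqrt5$-part here is exactly the relation $\sum_{k=R}^{L}d_kF_k=0$ arising from $\varphi^k=F_k\varphi+F_{k-1}$ and the irrationality of $\varphi$, but I will not need this explicitly.)

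Next I would bound the two pieces using only that the $d_k$ are $0/1$ and occur at non-consecutive indices. For $A$ every term is non-negative and the indices are $\le -1$ and spaced by at least $2$, so $0\le A<\varphi^{-1}/(1-\varphi^{-2})=1$, the inequality being strict because the sum is finite. For $B$ the nonzero terms sit at non-consecutive indices $\ge i$, where $i$ is the smallest positive index with $d_i=1$; hence by the triangle inequality and a geometric majorant
\[
|B| \;\le\; \sum_{m\ge 0}|\psi|^{\,i+2m} \;=\; \frac{\varphi^{-i}}{1-\varphi^{-2}} \;=\; \varphi^{\,1-i}\;\le\;1,
\]
again strict since $B$ is a finite sum. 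Isolating the leading term gives $|B-\psi^{i}|\le \varphi^{-1-i}<\varphi^{-i}=|\psi^{i}|$, so $B$ has the same sign as $\psi^{i}=(-1)^{i}\varphi^{-i}$; that is, $B>0$ for $i$ even and $B<0$ for $i$ odd.

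Finally I would combine these bounds with $\varepsilon\in\mathbb{Z}$. If $i$ is even, then $0\le A<1$ and $0<B<1$ force $\varepsilon=A-B\in(-1,1)$, so $\varepsilon=0$; if $i$ is odd, then $0\le A<1$ and $-1<B<0$ force $\varepsilon=A-B\in(0,2)$, so $\varepsilon=1$. This is precisely the claim, and the degenerate case $\beta^{+}(N)=d_0$ (no positive digit, i.e.\ $N=1$) is immediate and falls under the even branch. I expect the main obstacle to be the third step: making the estimate on $B$ strict in the boundary case $i=1$, where the crude bound only yields $|B|\le 1$. The non-consecutiveness of the digits and the finiteness of the expansion are exactly what upgrade this to a strict inequality, and the same leading-term comparison is what converts the parity of $i$ into the sign of $B$ — that sign computation is the real engine driving the dichotomy in the theorem.
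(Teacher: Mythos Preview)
Your proposal is correct and is essentially the paper's own argument: the paper also rewrites $N-\sum_{k\ge 1}d_kL_k-d_0$ as the difference of the negative-index $\varphi$-sum and the positive-index $\bar\varphi$-sum, bounds each piece in $(-1,1)$ via the geometric majorant from non-consecutiveness (its Lemma~1), and reads off the sign of the $\bar\varphi$-sum from its dominant term $\bar\varphi^{\,i}$ (its Lemma~2). Your packaging with $\psi$ and the single integer $\varepsilon$ is a bit cleaner, but the decomposition, the estimates, and the parity-of-$i$ sign analysis are the same.
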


Theorem 1 shows that $\beta^{+}(N)$ determines $N$. We have a similar result for $\beta^{-}(N)$, which is

\

\begin{theorem} \label{t2}
For a positive integer with representation $\beta(N)$ as in  {\rm (\ref{bn})}, let $i$ be the smallest positive index such that $i$ such that $d_i=1$.
\[N = d_R (-1)^{-R}L_{-R} + \dots + d_{-1} (-1)^{-1}L_{1} + d_0 \; \; \;{\rm for} \; \; i \; {\rm even}. \]
\[N = d_R (-1)^{-R}L_{-R} + \dots + d_{-1} (-1)^{-1}L_{1} + d_0 -1 \; \; \;{\rm for} \; \; i \; {\rm odd}. \]
\end{theorem}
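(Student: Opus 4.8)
The plan is to prove Theorem 2 by relating the golden-mean representation to Lucas numbers, exploiting the well-known identity $L_n = \varphi^n + \psi^n$ where $\psi = \frac{1-\sqrt{5}}{2} = -1/\varphi$ is the conjugate of $\varphi$. The starting point is the defining relation $N = \sum_{i=R}^{L} d_i \varphi^i$. I would apply the conjugation map $\varphi \mapsto \psi$ to this expression and study $\tilde{N} := \sum_{i=R}^{L} d_i \psi^i$. Since $\psi^i = (-1)^i \varphi^{-i}$, this conjugate sum mixes the positive and negative parts of $\beta(N)$ in a sign-twisted way. Adding $N + \tilde{N}$ produces $\sum_i d_i(\varphi^i + \psi^i) = \sum_i d_i L_i$, which is exactly the kind of Lucas-number combination appearing in both theorems (recall $L_{-R} = (-1)^{-R} L_R$ handles the negative indices, matching the $(-1)^{-R}$ factors in the statement).

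The key steps, in order, are: first, establish the algebraic identity $N + \tilde{N} = \sum_{i=R}^{L} d_i L_i$ from the conjugation, and split the right side into its negative-index part (which is precisely the Lucas sum written in Theorem 2) and its nonnegative-index part (the Lucas sum of Theorem 1). Second, I would control the size of $\tilde{N}$. Because $|\psi| < 1$, the negative powers $\psi^i$ for $i < 0$ are large while the positive powers are small, so $\tilde{N}$ is dominated by the negative part of the representation together with the leading conjugate term; the crucial observation is that $\tilde{N}$ is a small quantity lying in a controlled interval, so that $N$ is determined by $\sum d_i L_i$ up to rounding. Third, I would pin down the exact integer correction (the $+0$, $-1$, etc.) by analyzing the fractional behavior governed by the parity of the smallest positive index $i$ with $d_i = 1$. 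The non-consecutiveness condition $d_j d_{j+1} = 0$ is what keeps $\tilde{N}$ inside a half-unit window, and the parity of $i$ controls the sign of the leading contribution to $\tilde{N}$, which is exactly why the two cases (i even versus i odd) differ by the integer $1$.

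An alternative and perhaps cleaner route is to deduce Theorem 2 directly from Theorem 1, which is already available. Subtracting the two Lucas sums, Theorem 1 gives $N$ in terms of $\sum_{i\geq 1} d_i L_i$ plus a parity correction, while the conjugation identity $N + \tilde N = \sum_{i} d_i L_i$ furnishes the total; combining them isolates the negative-index Lucas sum $\sum_{i<0} d_i L_i = \sum_{i<0} d_i (-1)^{-i} L_{-i}$ appearing in Theorem 2. I would then only need to track how the two parity-dependent integer corrections combine, verifying that the $i$ even case yields $+d_0$ and the $i$ odd case yields $+d_0 - 1$ as claimed.

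The main obstacle I expect is the careful estimation controlling $\tilde{N}$ and extracting the exact integer correction term. The bulk contribution of the Lucas sum is an integer by construction, but $\tilde{N}$ itself is irrational, so the argument must show that the sum of positive-power conjugate contributions, together with the leading term's sign, always falls in the correct half-open interval to force the stated rounding. This requires a uniform bound using the geometric decay of $|\psi|^i$ for $i>0$ and the gap condition $i_{k+1} - i_k \geq 2$; getting the boundary cases right, precisely where the parity of $i$ flips the correction between $0$ and $-1$, is the delicate part and where I would spend the most care.
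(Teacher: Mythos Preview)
Your proposal has the right ingredients (conjugation and the identity $L_n=\varphi^n+\psi^n$) but contains a genuine gap: since $N$ is a rational integer it is fixed by the Galois conjugation $\sqrt5\mapsto-\sqrt5$, so in fact $\tilde N=N$. This makes your first approach unworkable as written: the claim that ``$\tilde N$ is a small quantity lying in a controlled interval'' is false, and the identity $N+\tilde N=\sum_i d_i L_i$ reads simply $2N=\sum_i d_i L_i$ (this is exactly the paper's Corollary~1), so no rounding argument on $\tilde N$ alone can pin down $N$. The same oversight affects your second approach: subtracting Theorem~1 from $N+\tilde N=\sum_i d_i L_i$ isolates the negative-index Lucas sum as an expression for $\tilde N$, not for $N$, and you need $\tilde N=N$ to conclude.

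Once that observation is added, your second route does work and is in fact shorter than the paper's argument: from $2N=\sum_{i\ge 1} d_i L_i + 2d_0 + \sum_{i<0} d_i L_i$ and Theorem~1 in the form $N=\sum_{i\ge 1} d_i L_i + d_0 + c$ with $c\in\{0,1\}$ according to parity, subtraction gives $N=\sum_{i<0} d_i L_i + d_0 - c$, which is Theorem~2 after writing $L_i=(-1)^{-i}L_{-i}$ for $i<0$. The paper does not reduce to Theorem~1; instead it conjugates the defining relation directly, rewrites the resulting positive powers of $\varphi$ (which originate from the original negative indices) as Lucas numbers plus small correction terms, and then uses Lemmas~1 and~2 to trap the leftover irrational part in an interval of length $2$, after which integrality of $N$ forces that leftover to be $0$ or $-1$ depending on the parity of the smallest positive index. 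So the quantity that actually needs to be bounded is not $\tilde N$ but this residual sum of negative powers of $\varphi$; your corrected alternative is cleaner but leans on Theorem~1, whereas the paper's proof is self-contained.
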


This result implies that in the expansion (\ref{1.1}), the negative part $\beta^{-}(N)$ together with $d_0$ and the parity of $i$ determines $N$.

One tool we use to prove this theorem is the algebraic conjugation. In Section $2$ we recall the conjugation and prove some basics about base $\varphi$ representations, and we then give a proof of the equivalence for (\ref{1.2}) and (\ref{1.3}). In Section $3$ we prove Theorem $1$ and $2$.

We thank Professor Greg Dresden for his advice and discussions.

\

\section{Zeckendorf representation of $NF_n$ and base $\varphi$ representation of $N$}

We introduce the algebraic conjugate on the set $\mathbb{Q}(\sqrt 5) = \{a+b\sqrt 5 \; | \; a, b \in \mathbb{Q}\}$, which is closed under $+$ and multiplication.
\[ (a+b\sqrt 5)(c+d\sqrt 5) = (ac + 5bd) + (bc + ad) \sqrt 5 \]
The algebraic conjugate of $(a+b\sqrt 5)$ is defined as $\overline{(a+b\sqrt 5)} = (a-b\sqrt 5)$. We have the property
\[ \overline{x+y} = \overline{x} + \overline y, \; \; \;  \overline{xy} = \overline{x} \cdot \overline{y}, \; \; \; \overline{x^n} = \overline{x}^n \]
The algebraic conjugate plays an important role in the proof of our results.
Notice that
\[ \overline{\varphi} = -\varphi^{-1}. \]
Recall that the Fibonacci numbers and the Lucas numbers can be extended to the negative indices, with
\begin{equation} \label{lucas}
    F_n = \frac{\varphi^n - \overline{\varphi}^n}{\sqrt{5}}, \; \;  \; \; L_n = \varphi^n+\overline{\varphi}^n .
\end{equation}

\begin{proposition} The following conditions are equivalent.
\begin{enumerate}
\item $NF_n = F_{n+i_1} + F_{n+i_2} + F_{n+i_3} + \dots + F_{n+i_m}$, for all integers $n$
\item $N = \varphi^{i_1} + \varphi^{i_2} + \dots + \varphi^{i_m}$.
\item $NL_n = L_{n+i_1} + L_{n+i_2} + L_{n+i_3} + \dots + L_{n+i_m}$, for all integers $n$
\end{enumerate}
\end{proposition}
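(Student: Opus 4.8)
The plan is to prove the three equivalences using the Binet-type formulas in (\ref{lucas}) together with the algebraic conjugate. The cleanest route is to show $(2) \Rightarrow (1)$, $(2) \Rightarrow (3)$, and then that either $(1)$ or $(3)$ implies $(2)$, so that all three become equivalent. The key observation is that $F_n$ and $L_n$ are both $\mathbb{Q}(\sqrt 5)$-linear combinations of $\varphi^n$ and $\overline\varphi^n$, so each Fibonacci or Lucas identity decomposes into a statement about the $\varphi$-part and a parallel statement about the $\overline\varphi$-part.

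First I would treat $(2) \Rightarrow (1)$. Assuming $N = \varphi^{i_1} + \cdots + \varphi^{i_m}$, I apply the conjugation map, which is a ring homomorphism, to get $\overline N = \overline\varphi^{\,i_1} + \cdots + \overline\varphi^{\,i_m}$; since $N$ is rational, $\overline N = N$. Now I multiply the first equation by $\varphi^n/\sqrt5$ and the conjugated equation by $\overline\varphi^{\,n}/\sqrt5$ and subtract:
\begin{equation} \label{prop-deriv}
N\,\frac{\varphi^n - \overline\varphi^{\,n}}{\sqrt5} = \sum_{k=1}^m \frac{\varphi^{n+i_k} - \overline\varphi^{\,n+i_k}}{\sqrt5}.
\end{equation}
By (\ref{lucas}) this is exactly $NF_n = \sum_k F_{n+i_k}$, valid for every integer $n$ since the manipulation never used positivity of $n$. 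The implication $(2)\Rightarrow(3)$ is identical except that I add the two scaled equations (using $L_n = \varphi^n + \overline\varphi^{\,n}$) rather than subtract, so no new idea is needed.

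For the reverse direction, I would show $(1)\Rightarrow(2)$ (the argument for $(3)\Rightarrow(2)$ is the same). Suppose $NF_n = \sum_k F_{n+i_k}$ holds for all integers $n$. Substituting the Binet formula and collecting the $\varphi^n$ and $\overline\varphi^{\,n}$ terms gives
\begin{equation} \label{prop-collect}
\Bigl(N - \sum_k \varphi^{i_k}\Bigr)\varphi^n = \Bigl(N - \sum_k \overline\varphi^{\,i_k}\Bigr)\overline\varphi^{\,n} \qquad \text{for all } n.
\end{equation}
The plan here is to let $n \to +\infty$: since $|\varphi| > 1 > |\overline\varphi|$, the right-hand side tends to $0$ while the left-hand side blows up unless its coefficient vanishes, forcing $N = \sum_k \varphi^{i_k}$, which is $(2)$. (Alternatively, because $\{\varphi^n, \overline\varphi^{\,n}\}$ are linearly independent over $\mathbb{Q}$ for the sequence of $n$, both coefficients must be zero.) The main obstacle is making this last step fully rigorous: I must argue that a single identity in $n$ forces the coefficient of the dominant term $\varphi^n$ to vanish, which is where the asymptotic/linear-independence argument carries the weight; everything else is bookkeeping with the homomorphism property of conjugation and the formulas in (\ref{lucas}).
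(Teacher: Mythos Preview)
Your proposal is correct and follows essentially the same route as the paper: the implications $(2)\Rightarrow(1)$ and $(2)\Rightarrow(3)$ are obtained exactly as you describe (multiply by $\varphi^n$, conjugate, then subtract or add and invoke (\ref{lucas})), and the reverse direction in the paper also rests on the asymptotic dominance of $\varphi^n$ over $\overline\varphi^{\,n}$. The only cosmetic difference is that the paper packages the limit step as the known ratio limit $\lim_{n\to\infty} F_{n+k}/F_n=\varphi^k$ (and likewise for $L_n$), dividing through by $F_n$ before letting $n\to\infty$, whereas you substitute Binet first and force the $\varphi^n$-coefficient in (\ref{prop-collect}) to vanish; these are the same argument in different clothing.
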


\begin{proof}
We first prove condition 2.~implies conditions 1.~and 3.
Assuming condition 2.,
\[N = \varphi^{i_1} + \varphi^{i_2} + \dots + \varphi^{i_m}\]
multiplying both sides by $\varphi^n$,
\begin{equation} \label{2.1}
N\varphi^n = \varphi^{n+i_1} + \varphi^{n+i_2} + \dots + \varphi^{n+i_m}
\end{equation}
Taking the algebraic conjugate of both sides, we get
\begin{equation} \label{2.2}
N\overline{\varphi^n} = \overline{\varphi^{n+i_1}} + \overline{\varphi^{n+i_2}} + \dots + \overline{\varphi^{n+i_m}}.
\end{equation}
We subtract these two equations, and divide both sides by $\sqrt{5}$, to get
\begin{align*}
N\frac{1}{\sqrt 5}(\varphi^n - \bar{\varphi}^n)
\ = \ \frac{1}{\sqrt 5} \Big(\ \ \ &\varphi^{n+i_1} + \varphi^{n+i_2} + \dots + \varphi^{n+i_m} \\
- &\overline{\varphi}^{n+i_1} - \overline{\varphi}^{n+i_2} - \dots - \overline{\varphi}^{n+i_m}\ \ \Big)
\end{align*}
So
\[NF_n = F_{n+i_1} + F_{n+i_2}  + \dots + F_{n+i_m}\]
we get condition 1~.
(\ref{2.2}) gives us
\[N\overline{\varphi}^n = \overline{\varphi}^{n+i_1} + \overline{\varphi}^{n+i_2} + \dots + \overline{\varphi}^{n+i_m}.\]
Summing the above and (\ref{2.1}), we get
\[N(\varphi^n + \overline{\varphi}^n) = (\varphi^{n+i_1}+\overline{\varphi}^{n+i_1}) + (\varphi^{n+i_2}+\overline{\varphi}^{n+i_2}) + \dots + (\varphi^{n+i_m}+\overline{\varphi}^{n+i_m}) \]
so
\[NL_n = L_{n+i_1} + L_{n+i_2} + \dots + L_{n+i_m} \]
we get condition 3~.
Because for all integers $k$,
\begin{equation}
\label{limFnk}
\lim_{n \to \infty} \frac{F_{n+k}}{F_n} =  \varphi^k
\end{equation}
for all positive integers $k$ \cite{7}.
Assuming condition 1.~,
\[NF_n = F_{n+i_1} + F_{n+i_2} + F_{n+i_3} + \dots + F_{n+i_m}\]
\begin{equation} \label {2}
N = \frac{F_{n+i_1}}{F_n} + \frac{F_{n+i_2}}{F_n} + \frac{F_{n+i_3}}{F_n} + \dots + \frac{F_{n+i_m}}{F_n}.
\end{equation}
Taking the limit $\lim_{n \to \infty}$ on both sides of equation (\ref{2}), apply
(\ref{limFnk}) to get condition 2.
To prove condition 3.~ implies condition 2.~, we use the formula
\[\lim_{n \to \infty}\frac{L_{n+k}}{L_n} = \varphi^k\] for all integers $k$ \cite{7}. The same method for condition 1 implies condition 2 works to prove condition 3 implies condition 2.
\end{proof}
\noindent{\bf Examples.}
In (\ref{Nfn}), we have formulas for $NF_n$ for $2 \leq N \leq 12$. The equivalence of condition 1. and condition 2. in Proposition 1 implies that
\begin{align*}
 2 &= \varphi^{1} + \varphi^{-2} \nonumber \\
 3 &= \varphi^{2} + \varphi^{-2} \nonumber \\
 4 &= \varphi^{2} + \varphi^0 + \varphi^{-2} \nonumber \\
 5 &= \varphi^{3} + \varphi^{-1} + \varphi^{-4} \nonumber \\
 6 &= \varphi^{3} + \varphi^{+1} + \varphi^{-4} \nonumber \\
 7 &= \varphi^{4} + \varphi^{-4} \nonumber \\
 8 &= \varphi^{4} + \varphi^{0} + \varphi^{-4} \nonumber \\
 9 &= \varphi^{4} + \varphi^{1} + \varphi^{-2} + \varphi^{-4} \nonumber \\
 10 &= \varphi^{4} + \varphi^{2} + \varphi^{-2} + \varphi^{-4} \nonumber \\
 11 &= \varphi^{4} + \varphi^{2} + \varphi^{0} + \varphi^{-2} + \varphi^{-4} \nonumber \\
 12 &= \varphi^{5} + \varphi^{1} + \varphi^{0} + \varphi^{-3} + \varphi^{-6}\nonumber  \\
\end{align*}
The equivalence of condition 2. and condition 3. in Proposition 1 implies that
\begin{align}
 2L_n &= L_{n+1} + L_{n-2} \nonumber \\
 3L_n &= L_{n+2} + L_{n-2} \nonumber \\
 4L_n &= L_{n+2} + L_n + L_{n-2} \nonumber \\
 5L_n &= L_{n+3} + L_{n-1} + L_{n-4} \nonumber \\
 6L_n &= L_{n+3} + L_{n+1} + L_{n-4} \nonumber \\
 7L_n &= L_{n+4} + L_{n-4} \nonumber \\
 8L_n &= L_{n+4} + L_{n} + L_{n-4} \nonumber \\
 9L_n &= L_{n+4} + L_{n+1} + L_{n-2} + L_{n-4} \nonumber \\
 10L_n &= L_{n+4} + L_{n+2} + L_{n-2} + L_{n-4} \nonumber \\
 11L_n &= L_{n+4} + L_{n+2} + L_{n} + L_{n-2} + L_{n-4} \nonumber \\
 12L_n &= L_{n+5} + L_{n+1} + L_{n} + L_{n-3} + L_{n-6}\nonumber  \\
\end{align}
\

\section{Proof of Theorem 1 and Theorem 2}
In this section we prove Theorem 1 and Theorem 2. We need the following lemmas in the proof.

\begin{lemma} \label{l1}
    Let $k_1 > k_2 > \dots > k_s$ be integers, and the gaps between every 2 consecutive terms is at least $2$, that is, $k_j-k_{j+1} \geq 2$, then
    \[\varphi^{k_1} + \varphi^{k_2} + \dots + \varphi^{k_s} < \varphi^{k_1+1} \]
\end{lemma}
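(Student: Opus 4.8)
The plan is to bound the sum $\varphi^{k_1} + \varphi^{k_2} + \dots + \varphi^{k_s}$ from above by replacing each term with the largest value it could take subject to the gap constraint, and then sum a geometric series. The key observation is that since $k_j - k_{j+1} \geq 2$, we have $k_j \leq k_1 - 2(j-1)$, so each exponent is at most $k_1, k_1 - 2, k_1 - 4, \dots$. Since $\varphi > 1$, the function $x \mapsto \varphi^x$ is increasing, and therefore
\[
\varphi^{k_1} + \varphi^{k_2} + \dots + \varphi^{k_s} \leq \sum_{j=0}^{s-1} \varphi^{k_1 - 2j} = \varphi^{k_1} \sum_{j=0}^{s-1} \varphi^{-2j}.
\]

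First I would evaluate the geometric sum: its ratio is $\varphi^{-2}$, so the partial sum is strictly less than the infinite sum $\sum_{j=0}^{\infty} \varphi^{-2j} = \frac{1}{1 - \varphi^{-2}}$. The heart of the argument is then the algebraic identity $\frac{1}{1-\varphi^{-2}} = \varphi$, which I would verify using the defining relation $\varphi^2 = \varphi + 1$. Indeed, $1 - \varphi^{-2} = \frac{\varphi^2 - 1}{\varphi^2} = \frac{\varphi}{\varphi^2} = \varphi^{-1}$, so the reciprocal is exactly $\varphi$. Combining these steps gives
\[
\varphi^{k_1} + \varphi^{k_2} + \dots + \varphi^{k_s} < \varphi^{k_1} \cdot \varphi = \varphi^{k_1+1},
\]
which is the desired bound.

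I expect the main subtlety, rather than a genuine obstacle, to be keeping the inequality strict at the right place. The bound $\sum_{j=0}^{s-1} \varphi^{-2j} < \frac{1}{1-\varphi^{-2}}$ is strict because the tail of a convergent geometric series with positive terms is strictly positive (this holds even when $s=1$, since then the finite sum is $1$ while the infinite sum exceeds $1$). This strictness carries through the final multiplication, so the conclusion $< \varphi^{k_1+1}$ is genuinely strict regardless of $s$. One should note that the lemma's statement does not require the $k_j$ to be nonnegative, but the argument above uses only that the exponents are real and the gap condition holds, so negative indices cause no difficulty.
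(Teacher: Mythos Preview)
Your proof is correct and follows essentially the same approach as the paper: both use the gap condition to deduce $k_j \le k_1 - 2(j-1)$, bound the sum by the geometric series $\varphi^{k_1}\sum_{j\ge 0}\varphi^{-2j}$, and invoke the identity $\frac{1}{1-\varphi^{-2}}=\varphi$ coming from $\varphi^2=\varphi+1$. Your write-up is in fact slightly cleaner about where the strict inequality enters (the finite partial sum versus the full geometric series), whereas the paper arranges the same chain starting from $\varphi^{k_1+1}$ and working downward.
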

\begin{proof}
Since $\varphi^2-\varphi-1=0, \; \varphi^{-1}=1-\varphi^{-2}$
\[\varphi = \frac{1}{1-\varphi^{-2}}\]

Now, by our assumptions on $k_1, \dots, k_s$, we have that $k_2 \leq k_1-2$, $k_3 \leq k_2-2 \leq k_1-4$, and in general $k_n \leq k_{n-1}-2 \leq \dots \leq k_1-2(n-1)$, so
\begin{align*}
\varphi^{k_1+1}
    & = \frac{\varphi^{k_1}}{1-\varphi^{-2}}\\
    & = \varphi^{k_1} + \varphi^{k_1-2}+\varphi^{k_1-4}+\dots \\
    & \geq \varphi^{k_1} + \varphi^{k_2} + \varphi^{k_3} + \dots \\
    & > \varphi^{k_1} + \varphi^{k_2} + \varphi^{k_3} + \dots + \varphi^{k_s}
\end{align*}
This proves Lemma 1.
\end{proof}

\

\begin{lemma}
    Let $k_1 > k_2 > \dots > k_s$ be integers, and the gaps between every 2 consecutive terms is at least $2$, that is, $k_j-k_{j+1} \geq 2$.  Then for $k_1$ even, we have
    \[(-1)^{k_1} \varphi^{k_1} + (-1)^{k_2}\varphi^{k_2} + \dots + (-1)^{k_s}\varphi^{k_s} > 0.\]
    For $k_1$ odd, we have
    \[(-1)^{k_1} \varphi^{k_1} + (-1)^{k_2}\varphi^{k_2} + \dots + (-1)^{k_s}\varphi^{k_s} < 0.\]
\end{lemma}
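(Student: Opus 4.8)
The plan is to show that in each case the leading term $(-1)^{k_1}\varphi^{k_1}$ dominates the sum of all the remaining terms, so that the sign of the entire expression is dictated by the sign of this leading term alone. Since $k_1$ even makes $(-1)^{k_1}\varphi^{k_1} = \varphi^{k_1} > 0$ while $k_1$ odd makes it $-\varphi^{k_1} < 0$, establishing such dominance immediately yields both claims.

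First I would isolate the leading term and bound the tail in absolute value by the triangle inequality:
\[
\left| \sum_{j=2}^{s} (-1)^{k_j}\varphi^{k_j} \right| \le \sum_{j=2}^{s} \varphi^{k_j} = \varphi^{k_2} + \varphi^{k_3} + \dots + \varphi^{k_s}.
\]
The truncated sequence $k_2 > k_3 > \dots > k_s$ still satisfies the gap-at-least-$2$ hypothesis, so Lemma \ref{l1} applies to it directly, giving $\varphi^{k_2} + \dots + \varphi^{k_s} < \varphi^{k_2+1}$. Because the gap condition forces $k_2 \le k_1 - 2$, we have $\varphi^{k_2+1} \le \varphi^{k_1-1} < \varphi^{k_1}$, so the tail has absolute value strictly smaller than $\varphi^{k_1}$.

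With this bound in hand the conclusion is immediate. For $k_1$ even the sum equals $\varphi^{k_1}$ plus a quantity that exceeds $-\varphi^{k_1}$, hence is positive; for $k_1$ odd it equals $-\varphi^{k_1}$ plus a quantity that is less than $\varphi^{k_1}$, hence is negative. The degenerate case $s=1$ is trivial, since then the sum is just the single leading term, whose sign is exactly $(-1)^{k_1}$.

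I do not anticipate a serious obstacle: the whole argument reduces to one application of Lemma \ref{l1} to the truncated sequence combined with the triangle inequality. The only point demanding a little care is that the tail bound be \emph{strict} (which it is, since $\varphi^{k_1-1} < \varphi^{k_1}$), so that the strict inequalities claimed in the statement are genuinely preserved rather than weakened to non-strict ones.
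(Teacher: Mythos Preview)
Your proposal is correct and follows essentially the same approach as the paper: both isolate the leading term, bound the absolute value of the tail by $\varphi^{k_2}+\dots+\varphi^{k_s}$, apply Lemma~\ref{l1} to the truncated sequence to get $\varphi^{k_2}+\dots+\varphi^{k_s}<\varphi^{k_2+1}\le\varphi^{k_1-1}<\varphi^{k_1}$, and conclude that the leading term dictates the sign. Your packaging via the triangle inequality handles both parities in one stroke, whereas the paper writes out the even and odd cases separately, but the underlying argument is identical.
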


\begin{proof}
By Lemma \ref{l1},
\begin{eqnarray*}
&& \varphi^{k_1} \\
&& > \varphi^{k_2+1} \\
&& > \varphi^{k_2} + \varphi^{k_3} + \varphi^{k_4} + \dots + \varphi^{k_s} \\
&& \geq (-1)^{k_2}\varphi^{k_2}+\dots+(-1)^{k_s}\varphi^{k_s}
\end{eqnarray*}
So
\[\varphi^{k_1} > (-1)^{k_2}\varphi^{k_2} + \dots + (-1)^{k_s}\varphi^{k_s}\]
For $k_1$ odd,
\[ (-1)^{k_1}\varphi^{k_1} = -\varphi^{k_1} < -((-1)^{k_2}\varphi^{k_2}+\dots+(-1)^{k_s}\varphi^{k_s})\]
\[(-1)^{k_1}\varphi^{k_1} + (-1)^{k_2}\varphi^{k_2}+\dots+(-1)^{k_s}\varphi^{k_s} < 0\]
For $k_1$ even,
\begin{eqnarray*}
    && (-1)^{k_1}\varphi^{k_1} + (-1)^{k_2}\varphi^{k_2} + \dots + (-1)^{k_s}\varphi^{k_s} \\
    && \geq (-1)^{k_1}\varphi^{k_1}-(\varphi^{k_2}+\dots+\varphi^{k_s})\\
    && \geq (-1)^{k_1}\varphi^{k_1} - \varphi^{k_2+1}\\
    && > (-1)^{k_1}\varphi^{k_1}-(\varphi^{k_1})\\
    && = \varphi^{k_1} - \varphi^{k_1}\\
    && = 0
\end{eqnarray*}
So for $k_1$ even,
\[(-1)^{k_1}\varphi^{k_1} + (-1)^{k_2}\varphi^{k_2} + \dots + (-1)^{k_s}\varphi^{k_s} > 0.\]
\

This proves Lemma 2.
\end{proof}

\

\begin{proof}[Proof of Theorem 1.]
Let $N$ be a positive integer with representation $\beta(N)$ as in ($\ref{bn}$), so
\[\beta(N) = d_L d_{L-1} \dots d_1 d_0 . d_{-1} d_{-2} \dots d_{R+1} d_R\]
Let $i_1 > \dots > i_m > 0$ be the positive indices with $d_i = 1$. Notice that $i_m$ is the first positive index such that $d_i = 1$. That is, $i_m$ is $i$ in the statement of Theorem 1. Let $-j_1 > \dots > -j_n$ be the negative indices with $d_{-j}=1$. So
\[N = \varphi^{i_1} + \dots + \varphi^{i_m} + d_0 + \varphi^{-j_1} + \dots + \varphi^{-j_n}\]
Adding and subtracting powers of $\bar{\varphi}$, we get
\begin{align*}
    N &= \varphi^{i_1} + \bar \varphi^{i_1} + \dots + \varphi^{i_m} + \bar \varphi^{i_m} \\
    & \;\; -(\bar \varphi^{i_1} + \dots + \bar \varphi^{i_m}) + \varphi^{-j_1} + \varphi^{-j_2} + \dots + \varphi^{-j_k} + d_0\\
\end{align*}
and now we use (\ref{lucas}) to re-write this as
\begin{equation}\label{integer}
N = L_{i_1} + \dots + L_{i_m} - ( (-1)^{-i_1} \varphi^{-i_1} + \dots + (-1)^{-i_m} \varphi^{-i_m} ) + \varphi^{-j_1} + \dots + \varphi^{-j_k} + d_0
\end{equation}

Using Lemma 1, we have
\begin{equation} \label{inq1}
0< \varphi^{ - j_1 } + \varphi^{-j_2 } + \dots + \varphi^{-j_k } < \varphi^{-j_1 +1 } \leq 1
\end{equation}

Using Lemma 2, we have
\begin{eqnarray}
    - \Big(  (-1)^{ - i_1 }\varphi^{- i_1}    +  \dots  + (-1)^{-i_m} \varphi^{- i_m} \Big) > 0  \; \; \; {\rm for} \; i_m \; {\rm odd} \label{odd} \\
    - \Big(  (-1)^{ - i_1 }\varphi^{- i_1}    +  \dots  + (-1)^{-i_m} \varphi^{- i_m} \Big) < 0  \; \; \; {\rm for} \; i_m \; {\rm even} \label{even}
\end{eqnarray}

Using Lemma 1, we have
\[ (-1)^{ - i_1 }\varphi^{- i_1}    +  \dots  + (-1)^{-i_m} \varphi^{- i_m} \leq \varphi^{-i_1} +\dots+\varphi^{-i_m} < 1\]
For $i_m$ odd, by (\ref{odd}),
\[0 < - \Big(  (-1)^{ - i_1 }\varphi^{- i_1}     +  \dots  + (-1)^{-i_m} \varphi^{- i_m} \Big) < 1\]
This with (\ref{inq1}) implies that
\[0 < - \Big(  (-1)^{ - i_1 }\varphi^{- i_1}     +  \dots  + (-1)^{-i_m} \varphi^{- i_m} \Big) + \varphi^{ - j_1 } + \varphi^{-j_2 } + \dots + \varphi^{-j_k } < 2,\]
(\ref{integer}) implies that
\[- \Big(  (-1)^{ - i_1 }\varphi^{- i_1}     +  \dots  + (-1)^{-i_m} \varphi^{- i_m} \Big) + \varphi^{ - j_1 } + \varphi^{-j_2 } + \dots + \varphi^{-j_k }\] is an integer.
So
\[- \Big(  (-1)^{ - i_1 }\varphi^{- i_1}     +  \dots  + (-1)^{-i_m} \varphi^{- i_m} \Big) + \varphi^{ - j_1 } + \varphi^{-j_2 } + \dots + \varphi^{-j_k }=1.\]
By (\ref{integer})
\[N = L_{i_1} + L_{i_2} + \dots + L_{i_m} + d_0+ 1\]
This is equivalent to the first identity of Theorem 1.

For $i_m$ even, by (\ref{even}),
\[-1 < - (  (-1)^{ - i_1 }\varphi^{- i_1}    +  \dots  + (-1)^{-i_m} \varphi^{- i_m} ) < 0\]
This with (\ref{inq1}) implies that
\[-1<- (  (-1)^{ - i_1 }\varphi^{- i_1}     +  \dots  + (-1)^{-i_m} \varphi^{- i_m} ) + \varphi^{ - j_1 } + \varphi^{-j_2 } + \dots + \varphi^{-j_k } <1\]
(\ref{integer}) implies that
\[- (  (-1)^{ - i_1 }\varphi^{- i_1}     +  \dots  + (-1)^{-i_m} \varphi^{- i_m} ) + \varphi^{ - j_1 } + \varphi^{-j_2 } + \dots + \varphi^{-j_k }\] is an integer. So
\[- (  (-1)^{ - i_1 }\varphi^{- i_1}     +  \dots  + (-1)^{-i_m} \varphi^{- i_m} ) + \varphi^{ - j_1 } + \varphi^{-j_2 } + \dots + \varphi^{-j_k } = 0.\]
By (\ref{integer}),
\[N = L_{i_1} + L_{i_2} + \dots + L_{i_m} + d_0 \]
This is equivalent to the second identity of Theorem 1.
\end{proof}

The proof of Theorem 2 uses a similar method with more cases.

\begin{proof}[Proof of Theorem 2.]

Let $N$ be a positive integer with representation $\beta(N)$ as in ($\ref{bn}$), so
\[\beta(N) = d_L d_{L-1} \dots d_1 d_0 . d_{-1} d_{-2} \dots d_{R+1} d_R\]
Let $i_1 > \dots > i_m > 0$ be the positive indices with $d_i = 1$. Let $-j_1 > \dots > -j_n$ be the negative indices with $d_{-j}=1$. Notice that $i_m$ is the first positive index such that $d_i=1$, that is, $i_m$ is the $i$ in the statement of Theorem 2.
So
\[N = \varphi^{i_1} + \dots + \varphi^{i_m} + d_0 + \varphi^{-j_1} + \dots + \varphi^{-j_n}\]
Taking the algebraic conjugate of both sides, we get
\begin{align}
    N &= {\bar \varphi}^{i_1} + \dots + {\bar \varphi}^{i_m} + d_0 + {\bar \varphi}^{-j_1} + \dots + {\bar \varphi}^{-j_n} \nonumber \\
    &= (-1)^{j_n}\varphi^{j_n} + \dots + (-1)^{j_1}\varphi^{j_1} + d_0 + (-1)^{i_m}\varphi^{-i_m} + \dots + (-1)^{i_1}\varphi^{-i_1}
\end{align}
Adding and subtracting powers of $\bar{\varphi}$,
\begin{align}
N & = (-1)^{j_n}(\varphi^{j_n} + {\bar \varphi}^{j_n}) + \dots + (-1)^{j_1}(\varphi^{j_1} + {\bar \varphi}^{j_1}) \nonumber \\
 & \; \; - ((-1)^{j_n}{\bar \varphi}^{j_n} + \dots + (-1)^{j_1}{\bar \varphi}^{j_1}) + (-1)^{i_m}\varphi^{i_m} + \dots + (-1)^{i_1}\varphi^{-i_1} + d_0 \nonumber \\
 &= (-1)^{j_n}(\varphi^{j_n} + {\bar \varphi}^{j_n}) + \dots + (-1)^{j_1}(\varphi^{j_1} + {\bar \varphi}^{j_1}) \nonumber \\
 & \; \; - ({\varphi}^{-j_n} + \dots + {\varphi}^{-j_1}) + (-1)^{i_m}\varphi^{-i_m} + \dots + (-1)^{i_1}\varphi^{-i_1} + d_0 \nonumber
\end{align}
so
\begin{align}
N &= (-1)^{j_n}L_{j_n} + \dots + (-1)^{j_1}L_{j_1} \nonumber \\
&\; \; - ({\varphi}^{-j_1} + \dots + {\varphi}^{-j_n})  \nonumber \\
&\; \; +(-1)^{i_1}\varphi^{-i_1} + \dots + (-1)^{i_m}\varphi^{-i_m} + d_0 \label{integer2}
\end{align}
Using Lemma 1, we have
\begin{equation} \label{inq2}
    -1 < - ({\varphi}^{-j_1} + \dots + {\varphi}^{-j_n}) < 0
\end{equation}
Using Lemma 1 and Lemma 2
\begin{eqnarray}
    -1 < (-1)^{ - i_m }\varphi^{- i_m}    +  \dots  + (-1)^{-i_1} \varphi^{- i_1} < 0  \; \; \; {\rm for} \; i_1 \; {\rm odd} \label{oddi} \\
    0 < (-1)^{ - i_m }\varphi^{- i_m}    +  \dots  + (-1)^{-i_1} \varphi^{- i_1} < 1  \; \; \; {\rm for} \; i_1 \; {\rm even} \label{eveni}
\end{eqnarray}
(\ref{integer2}) implies that
\[- ({\varphi}^{-j_1} + \dots + {\varphi}^{-j_n}) +(-1)^{i_1}\varphi^{-i_1} + \dots + (-1)^{i_m}\varphi^{-i_m}\] is an integer.
If $i_1$ is odd, then (\ref{inq2}) add (\ref{oddi}) implies that
\[-2 < - ({\varphi}^{-j_1} + \dots + {\varphi}^{-j_n}) +(-1)^{i_1}\varphi^{-i_1} + \dots + (-1)^{i_m}\varphi^{-i_m} < 0\]
so
\[- ({\varphi}^{-j_1} + \dots + {\varphi}^{-j_n}) +(-1)^{i_1}\varphi^{-i_1} + \dots + (-1)^{i_m}\varphi^{-i_m} = -1\]
\[N = (-1)^{j_n}L_{j_n} + \dots + (-1)^{j_1}L_{j_1} + d_0 -1\]
This proves Identity 1 of Theorem 2.
If $i_1$ is even, then (\ref{inq2}) add (\ref{eveni}) implies that
\[-1 < - ({\varphi}^{-j_1} + \dots + {\varphi}^{-j_n}) +(-1)^{i_1}\varphi^{-i_1} + \dots + (-1)^{i_m}\varphi^{-i_m} < 1\]
so
\[- ({\varphi}^{-j_1} + \dots + {\varphi}^{-j_n}) +(-1)^{i_1}\varphi^{-i_1} + \dots + (-1)^{i_m}\varphi^{-i_m} = 0\]
\[N = (-1)^{j_n}L_{j_n} + \dots + (-1)^{j_1}L_{j_1} + d_0\]
This proves Identity 2 of Theorem 2.
\end{proof}

\

\begin{corollary}
Let $N$ be a positive integer with representation $\beta(N)$ as
\[\beta(N) = d_L d_{L-1} \dots d_1 d_0 . d_{-1} d_{-2} \dots d_{R+1} d_R\]
then
\begin{align*}
2N &= d_L L_{L} + d_{L-1}L_{L-1} + \dots + d_1 L_{1} + 2d_0 \\
&\; \; + d_R (-1)^{-R}L_{-R} + \dots + d_{-1} (-1)^{-1}L_{1} .
\end{align*}
\end{corollary}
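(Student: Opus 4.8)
The plan is to obtain the Corollary by simply adding together the expressions for $N$ furnished by Theorem 1 and Theorem 2, and then observing that the parity-dependent correction terms cancel. First I would fix the notation exactly as in the statement: let $i$ be the smallest positive index with $d_i = 1$. The crucial point is that this is the \emph{same} $N$ and the \emph{same} $i$ appearing in both theorems, so I may apply both simultaneously and add the two resulting identities term by term.

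Next I would split into the two cases according to the parity of $i$. For $i$ odd, Theorem 1 gives $N = d_L L_L + \dots + d_1 L_1 + d_0 + 1$, while Theorem 2 gives $N = d_R (-1)^{-R} L_{-R} + \dots + d_{-1}(-1)^{-1} L_1 + d_0 - 1$. Adding these two equations produces $2N$ on the left, and on the right the positive-index Lucas terms, the negative-index Lucas terms (carrying their $(-1)^{-R}, \dots, (-1)^{-1}$ signs), two copies of $d_0$, and the constant $+1 - 1 = 0$. Thus the $+1$ from Theorem 1 and the $-1$ from Theorem 2 exactly cancel, and what remains is precisely the claimed formula for $2N$.

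For $i$ even the argument is even more immediate: Theorem 1 gives $N = d_L L_L + \dots + d_1 L_1 + d_0$ and Theorem 2 gives $N = d_R (-1)^{-R} L_{-R} + \dots + d_{-1}(-1)^{-1} L_1 + d_0$, now with no correction constants at all. Adding these again yields exactly the same identity for $2N$. Hence in both parity cases one arrives at the single formula in the statement, and the result holds uniformly in $i$.

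Because the Corollary is a direct consequence of the two theorems, I do not expect any genuine obstacle; the only step requiring care is the bookkeeping observation that in the odd case the two constants cancel while in the even case there are none, so that the parity of $i$ drops out entirely. This is exactly what permits the Corollary to be stated without any reference to $i$, and I would close by noting this uniformity explicitly.
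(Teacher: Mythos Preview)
Your proposal is correct and matches the paper's first proof of the Corollary essentially verbatim: the paper simply says ``Summing the identities of $N$ in Theorem 1 and Theorem 2,'' and your case split on the parity of $i$ is just the explicit verification that the $\pm 1$ corrections cancel. The paper also offers a second, independent proof via Proposition~1 (setting $n=0$ in the Lucas identity $NL_n = \sum L_{n+i_k}$ and using $L_0=2$), which you may find a pleasant alternative.
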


\begin{proof}
Summing the identities of $N$ in Theorem 1 and Theorem 2, we get
\[2N = d_L L_{L} + d_{L-1}L_{L-1} + \dots + d_1 L_{1} + 2d_0 \\
\; \; + d_R (-1)^{-R}L_{-R} + \dots + d_{-1} (-1)^{-1}L_{1} .\]
\end{proof}
We give a different proof using Proposition 1.
\begin{proof} [Proof 2.]
Let $i_1 > \dots > i_m > -j_1 > \dots > -j_s$ be the list of indices such that $d_i=1$ and $i_k \geq 0$ and $-j_k < 0$. So
\[N = \varphi^{i_1} + \dots + \varphi^{i_m} + \varphi^{-j_1} + \dots + \varphi^{-j_s} \]
By the equivalence between condition 2. and condition 3. in Proposition 1, we have
\[NL_n = L_{n+i_1} + \dots + L_{n+i_m} + L_{n-j_1} + \dots + L_{n-j_s}\] for all integers $n$. Taking $n=0$,
\[NL_0 = L_{i_1} + \dots + L_{i_m} + L_{-j_1} + \dots + L_{-j_s}\]
Since $L_0=2$, and $L_{-n} = (-1)^n L_n$ \cite{7},
\[2N = L_{i_1} + \dots + L_{i_m} + (-1)^{j_1} L_{j_1} + \dots + (-1)^{j_s} L_{j_s}\]
which is equivalent to the identity in Corollary 1.
\end{proof}

  Email Address: edwardz2024@student.cis.edu.hk

\end{document}